\newtheorem{theorem}{Theorem}
\newtheorem{lemma}[theorem]{Lemma}
\newtheorem{corol}{Corollary}[theorem]
\newcommand{\R}{\mathbb{R}}
\newcommand{\cK}{{\mathcal K}}
\newcommand{\cP}{{\mathcal P}}
\newcommand{\cT}{{\mathcal T}}
\newcommand{\cpni}{\cP^n_{\scriptscriptstyle 0}} %polytopes in n dimensions containing the origin
\newcommand{\cpnii}{\cP^n_{\scriptscriptstyle (0)}} %polytopes in n dimensions containing the origin in their interiors
\newcommand{\ckni}{\cK^n_{\scriptscriptstyle 0}} %convex bodies in n dimensions containing the origin
\newcommand{\1}{\operatorname{\mathbbm{1}}}
\newcommand{\sln}{\operatorname{SL}(n)}
\newcommand{\relint}{\operatorname{relint}}
\newcommand{\lin}{\operatorname{lin}}
\newcommand{\aff}{\operatorname{aff}}
\begin{document}

\title{SL$(n)$ invariant valuations on polytopes}

\author{Monika Ludwig and Matthias Reitzner}

\begin{abstract}
A classification of $\sln$ invariant valuations on the space of convex polytopes in $\R^n$ without any continuity assumptions  is established. A corresponding result is obtained on the space of convex polytopes in $\R^n$ that contain the origin.
\end{abstract}

\maketitle 
\medskip\noindent
{\footnotesize 2000 AMS subject classification: Primary 52B45; Secondary 52A20.}\bigskip

\section{Introduction}

Since Felix Klein announced his Erlangen program nearly 150 years ago, the study and classification of invariants of geometric objects with respect to transformation groups are among  the most important tasks in geometry. In Euclidean space $\R^n$, volume and the Euler characteristic are invariant under translations and rotations. They are even invariant under maps from the special linear group, $\sln$. Both invariants turn out to have a further natural property. They satisfy the inclusion-exclusion principle showing that these functionals are valuations. Here, a functional  $\Psi$ is called a {\em valuation} on a collection $\mathcal S$ of sets if
\begin{equation}\label{valu}
\Psi(P)+\Psi(Q) = \Psi(P\cup Q) + \Psi(P\cap Q)
\end{equation}
whenever $P,Q, P\cap Q, P\cup Q \in \mathcal S$. Hence it is a natural task to classify invariants which are also valuations. 

The aim of this paper is to obtain a complete classification of $\sln$ invariants which are valuations on the set $\cP^n$ of convex polytopes in $\R^n$. We show that besides volume and the Euler characteristic there are further invariant valuations. We characterize these $\sln$ invariant valuations and prove that among these volume and Euler characteristic are essentially the only  invariants with certain continuity properties.

The classification of valuations using invariance and continuity properties is a classical part of geometry with important applications in integral geometry  (cf.\ \cite{Klain:Rota} and \cite[Chapter 6]{Schneider:CB2}). Such results are useful in the affine geometry of convex bodies and for affine invariant problems in analysis. 
As mentioned above, the  $n$-dimensional volume $V_n: \cP^n\to \R$ and the Euler characteristic $V_0:\cP^n\to \R$  (for which $V_0(P)=1$ for $P\ne\emptyset$) are the most important such functionals. Since we do not assume continuity, also functionals that depend on (possibly discontinuous) solutions $\psi:[0, \infty)\to \R$ of Cauchy's functional equation
$$\psi(x+y)=\psi(x)+\psi(y)$$
for $x,y\in[0,\infty)$ will occur.

Denote by $\cpni$ the subspace of convex polytopes that contain the origin.
First, we consider valuations defined on $\cpni$ and obtain the following result. Let $\dim P$ be the dimension of the polytope $P$, that is, the dimension of its affine hull, $\aff P$, and write $\relint P$ for the relative interior of $P$ with respect to $\aff P$. Let $n\ge 2$ throughout the paper.

\begin{theorem}\label{th:1}
A functional $\,\Psi:\cpni \to \R$ is an $\,\sln$ invariant valuation if and only if there are 
constants $c_0,c_0'\in\R$ 
and a solution  $\psi:[0,\infty)\to \R$ of Cauchy's functional equation such that 
$$\Psi (P)= c_0\,V_0(P) + c_0' \,(-1)^{\dim P\!}  \1_{\relint P} (0) +  \psi\big(V_n(P)\big)  $$
for every $P\in\cpni$.
\end{theorem}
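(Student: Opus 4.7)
For the ``if'' direction the three terms are verified to be $\sln$ invariant valuations directly: $V_0$ is plainly both; for $\Phi(P):=(-1)^{\dim P}\,\1_{\relint P}(0)$, the $\sln$ invariance is immediate since $\sln$ fixes the origin and preserves dimension, and the valuation property follows from the standard combinatorial identity $\1_{\relint P}=\sum_{F\subseteq P}(-1)^{\dim P-\dim F}\,\1_F$ combined with the fact that each $\1_F$ is a valuation; finally $P\mapsto\psi(V_n(P))$ is a valuation because $V_n$ is a valuation, $\psi(0)=0$, and Cauchy's equation supplies the required additivity.

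For the ``only if'' direction the plan is to argue by induction on $k=\dim P$. The central technical lever is this: for every $0\le k<n$, the restriction of the $\sln$ action to $k$-dimensional polytopes in $\R^n$ realises the full $\gln(k)$ action on any chosen affine hull. Indeed, any $A\in\gln(k)$ between two $k$-planes extends to an element of $\sln$ by acting on a complementary $(n-k)$-plane with determinant $(\det A)^{-1}$, which is possible because $n-k\ge 1$. In particular $\sln$ acts transitively on $k$-simplices with $0$ at a vertex for every $0\le k\le n$, and on $k$-simplices with $0$ in the relative interior once the barycentric coordinates of $0$ are fixed (for $k<n$).

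For $k=0$ the only polytope is $\{0\}$, whose value specifies $c_0+c_0'$. For $1\le k<n$, the transitivity above shows that $\Psi$ takes a single value on all $k$-simplices with $0$ at a vertex; dissecting such a simplex along a hyperplane through $0$ and substituting the inductive formula on the $(k-1)$-dimensional intersection forces this value to equal $c_0$ independently of any parameters. A second dissection writes any $k$-polytope with $0$ in its relative interior as a union of cones $\operatorname{conv}(0,F)$ over its facets $F$, and the valuation identity then pins $\Psi$ down to $c_0+(-1)^k c_0'$ on every such polytope. For $k=n$ volume becomes a genuine $\sln$ invariant; on $n$-simplices $T$ with $0$ at a vertex one may write $\Psi(T)=f(V_n(T))$, and dissecting $T$ along a hyperplane through $0$ into pieces of volumes $a$ and $b$ yields, via the valuation property and the already-determined lower-dimensional value of the intersection, a relation $f(a)+f(b)=f(a+b)+c$. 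Setting $\psi(x):=f(x)-c$ produces a solution of Cauchy's equation with $\psi(0)=0$. Finally, an arbitrary $P\in\cpni$ with $\dim P=n$ is star-decomposed from $0$ as $P=\bigcup_i\operatorname{conv}(0,F_i)$ over the facets $F_i$ of $P$, and the valuation identity recovers $\Psi(P)$ as $\psi(V_n(P))$ plus a combination of the lower-dimensional constants that a signed count identifies with $c_0V_0(P)+c_0'(-1)^n\,\1_{\relint P}(0)$.

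The step I expect to be the main obstacle is this last bookkeeping at dimension $n$: after $\psi$ has been identified on simplices with $0$ at a vertex, one must verify that the formula extends to arbitrary $n$-polytopes with $0$ on the relative boundary and with $0$ in the interior so that exactly the term $(-1)^n c_0'\,\1_{\relint P}(0)$ appears in the latter case. This requires careful tracking of how the constants $c_0,c_0'$ propagate through each dissection, which is where the alternating-sign structure of $(-1)^{\dim P}\,\1_{\relint P}(0)$ becomes essential rather than incidental.
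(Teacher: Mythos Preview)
Your outline coincides with the paper's in the crucial first half: determining $\Psi$ on simplices with a vertex at the origin via $\sln$ transitivity (for $k<n$) and a dissection yielding Cauchy's equation (for $k=n$) is exactly the content of the paper's Lemma~\ref{le:simplex}. Your verification of the ``if'' direction via the Euler identity $\1_{\relint P}=\sum_{F}(-1)^{\dim P-\dim F}\1_F$ is a legitimate alternative to the paper's direct case analysis.

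Where the approaches diverge is the extension from simplices to arbitrary polytopes. You propose to star-decompose $P=\bigcup_i[0,F_i]$ and track the constants $c_0,c_0'$ through inclusion--exclusion; you correctly flag this bookkeeping as the main obstacle and leave it unresolved. The paper sidesteps it entirely: having established the values on $\cT^0\cup\cdots\cup\cT^n$, it subtracts the conjectured formula to obtain a valuation $\Psi'$ vanishing on all such simplices, and then invokes a cited result of Haberl (Lemma~\ref{christoph}): a \emph{simple} valuation on $\cpni$ that vanishes on $n$-simplices with a vertex at $0$ vanishes identically. A short induction on $\dim P$ then finishes in a few lines.

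Your direct route can be made to work, but it effectively amounts to reproving Haberl's lemma, and your sketch is thinner than it appears in two places: (i)~the cones $[0,F]$ in the star decomposition are not simplices in general, so a further triangulation is needed, together with the verification that every intersection arising in inclusion--exclusion is again a simplex with $0$ as a vertex (or $\{0\}$ itself, where the value is $c_0+c_0'$ rather than $c_0$); (ii)~within each dimension $k$ you pass directly from ``simplices with $0$ at a vertex'' to ``polytopes with $0$ in the relative interior'', but the star decomposition already requires knowing $\Psi$ on $k$-dimensional polytopes with $0$ on the relative boundary (namely the cones $[0,F]$ themselves), so the intermediate case must be handled first. The paper's subtract-and-cite strategy avoids all of this accounting.
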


\noindent
Here $\1_Q$ is the indicator function of $Q\subset \R^n$, that is, $\1_Q(x)=1$ if $x\in Q$ and $\1_Q(x)=0$ otherwise.

Let $\cP^n$ and $\cpni$ be equipped with the standard topology, which comes from the Hausdorff metric. 
A functional on $\cP^n$ or $\cpni$ is (Borel)\! {\em measurable} if the pre-image of every open set in $\R$ is a Borel set. In Section \ref{corollaries}, we show that $P\mapsto (-1)^{\dim P\!}  \1_{\relint P} (0)$ is measurable. It is well known that every measurable solution of Cauchy's functional equation is linear. This immediately implies the following result.

\begin{corol}\label{co:1a}
A functional $\,\Psi:\cpni \to \R$ is a measurable and $\,\sln$ invariant valuation if and only if there are constants $c_0, c_0', c_n \in\R$ such that 
\begin{equation}\label{eq:co:1a}
\Psi (P)= c_0\,V_0(P) +c_0'\, (-1)^{\dim P\!}  \1_{\relint P} (0) + c_n\,V_n(P) 
\end{equation}
for every $P\in\cpni$.
\end{corol}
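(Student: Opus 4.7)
The plan is to deduce the corollary from Theorem \ref{th:1} by upgrading the Cauchy solution $\psi$ to a linear function under the extra measurability hypothesis. By Theorem \ref{th:1}, an $\sln$ invariant valuation $\Psi$ on $\cpni$ necessarily has the form
\[
\Psi(P) = c_0\, V_0(P) + c_0'\, (-1)^{\dim P\!}\, \1_{\relint P}(0) + \psi\big(V_n(P)\big)
\]
for suitable constants $c_0, c_0' \in \R$ and some (a priori discontinuous) solution $\psi:[0,\infty)\to\R$ of Cauchy's equation. Conversely, every functional of the form \eqref{eq:co:1a} is an $\sln$ invariant valuation by the ``if'' direction of Theorem \ref{th:1}, and is measurable since $V_0$ and $V_n$ are continuous and the middle term will be shown to be measurable. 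The only nontrivial content is therefore the ``only if'' part under the measurability assumption.

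The first step is to establish that $F:\cpni \to \R$, defined by $F(P)=(-1)^{\dim P\!}\, \1_{\relint P}(0)$, is Borel measurable; this is the claim advertised in the paragraph preceding the corollary. I would argue that the dimension $\dim:\cpni\to\{0,\dots,n\}$ is lower semicontinuous in the Hausdorff metric, since if $P_k\to P$ in Hausdorff distance and $P$ contains $d+1$ affinely independent points, then so does $P_k$ for all large $k$; hence each stratum $\cpni_d:=\{P:\dim P=d\}$ is a Borel (in fact locally closed) subset of $\cpni$. On the stratum $\cpni_d$ the condition $0\in\relint P$ can be rephrased as $0$ not lying on any of the finitely many facets of $P$, a condition that can be checked from the vertex set of $P$; combining these observations with the measurability of the passage to the vertex set on each fixed-dimension stratum yields the Borel measurability of $F$.

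Once $F$ is known to be measurable, the map $P\mapsto \psi\big(V_n(P)\big) = \Psi(P) - c_0 V_0(P) - c_0' F(P)$ is a difference of measurable functions, and is therefore itself measurable. Precomposing with the continuous section $t\mapsto t^{1/n} T$, where $T\in\cpni$ is a fixed simplex of unit volume, transfers this measurability to $\psi:[0,\infty)\to\R$. By the classical theorem that every Lebesgue (or even Baire) measurable solution of Cauchy's functional equation is linear, one concludes $\psi(t)=c_n t$ for some $c_n\in\R$, which is exactly the claim \eqref{eq:co:1a}.

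The main obstacle is the first step: both $\dim P$ and the condition $0\in\relint P$ are discontinuous functions of $P$ in the Hausdorff topology, so one has to stratify by dimension and argue carefully on each stratum. After that, the reduction from a general Cauchy solution to a linear function via measurability is routine.
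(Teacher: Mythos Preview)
Your proof is correct and follows essentially the same route as the paper: stratify $\cpni$ by dimension to show that $P\mapsto (-1)^{\dim P}\1_{\relint P}(0)$ is Borel measurable, subtract it off to make $\psi\circ V_n$ measurable, and then invoke the classical fact that measurable Cauchy solutions are linear. The only difference is that the paper handles the $\relint$ condition more directly, observing that $\{P\in\cpni:\dim P=k,\ 0\in\relint P\}$ is \emph{open} in the $k$-dimensional stratum, which is cleaner than your detour through facets and vertex sets; your explicit precomposition $t\mapsto t^{1/n}T$ to transfer measurability to $\psi$ is a step the paper leaves implicit.
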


Let $\cK^n$ denote the space of convex bodies (that is, compact convex sets) in $\R^n$ and let $\ckni$ be the subspace of convex bodies that contain the origin. On these spaces, important upper semi\-continuous functionals exist that are defined as certain curvature integrals. Classification results for $\sln$ invariant and upper semi\-continuous valuations were established in \cite{Ludwig:Reitzner, Ludwig:Reitzner2, Ludwig:affinelength}. 
For $\sln$ invariant valuations on  polytopes containing the origin,  the following result is a consequence of Corollary~\ref{co:1a}.

\begin{corol}\label{co:1b}
A functional $\,\Psi:\cpni \to \R$ is an upper semicontinuous and $\sln$ invariant valuation if and only if there are constants $c_0, c_n\in\R$ such that 
$$\Psi (P)= c_0\,V_0(P) +c_n\,V_n(P) $$
for every $P\in\cpni$.
\end{corol}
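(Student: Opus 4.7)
Sufficiency is immediate: $V_0$ and $V_n$ are continuous (hence upper semicontinuous) $\sln$ invariant valuations on $\cpni$, and any real-linear combination inherits both properties. For the converse, let $\Psi$ be an upper semicontinuous $\sln$ invariant valuation on $\cpni$. Theorem~\ref{th:1} gives the representation
\[
\Psi(P) = c_0\,V_0(P) + c_0'\,(-1)^{\dim P}\1_{\relint P}(0) + \psi\bigl(V_n(P)\bigr)
\]
for suitable constants $c_0, c_0'\in\R$ and some solution $\psi:[0,\infty)\to\R$ of Cauchy's equation. The entire task is to use upper semicontinuity to force $\psi$ to be linear and $c_0'$ to vanish.

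\emph{Linearity of $\psi$.} I would first restrict $\Psi$ to the one-parameter family $\{tT : t > 0\}$, where $T \in \cpni$ is a fixed $n$-dimensional polytope with the origin in its interior. The map $t\mapsto tT$ is continuous into $\cpni$, so $t\mapsto \Psi(tT)$ is upper semicontinuous on $(0,\infty)$. On this curve, $V_0\equiv 1$ and $(-1)^{\dim(tT)}\1_{\relint(tT)}(0)\equiv (-1)^n$ are constant, whereas $V_n(tT) = t^n V_n(T)$ sweeps out all of $(0,\infty)$. Hence $\psi$ is upper semicontinuous on $(0,\infty)$, and in particular Borel measurable. Together with $\psi(0)=0$ (from Cauchy's equation), the classical fact quoted in the introduction forces $\psi(x) = c_n\,x$ for some $c_n\in\R$.

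\emph{Vanishing of $c_0'$.} I would then produce two explicit Hausdorff-convergent sequences in $\cpni$ and apply upper semicontinuity at their limits, pinning $c_0'$ from above and below. For $c_0' \ge 0$: take $P_k := \operatorname{conv}\{0,\,e_1/k\}$, a segment with the origin as a vertex, so the indicator term for $P_k$ vanishes; as $P_k \to \{0\}$ in the Hausdorff metric and the indicator contributes $c_0'$ at the limit, upper semicontinuity forces $c_0 \le c_0 + c_0'$. For $c_0' \le 0$: start from $P := \operatorname{conv}\{\pm e_1\}$, whose indicator contributes $-c_0'$ since $\dim P = 1$ and $0\in\relint P$; approximate $P$ by the two-dimensional polytopes $P_k := \operatorname{conv}\{\pm e_1,\,\pm e_2/k\}$, each containing the origin in its relative interior and satisfying $V_n(P_k)\to 0$. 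The indicator term for $P_k$ now contributes $+c_0'$, and upper semicontinuity yields $c_0 + c_0' \le c_0 - c_0'$. Combining the two inequalities gives $c_0' = 0$, whence $\Psi = c_0\,V_0 + c_n\,V_n$.

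The main obstacle is the second step: upper semicontinuity produces only one-sided inequalities, and the indicator term $(-1)^{\dim P}\1_{\relint P}(0)$ is locally constant away from its discontinuity set, so most natural approximations convey no information at all or only a single inequality. The decisive idea is to exploit the parity flip $(-1)^{\dim P}$ across adjacent dimensions by pairing a test polytope with an approximating sequence of opposite-parity dimension, which is exactly what the two sequences above achieve to yield matching upper and lower bounds on $c_0'$.
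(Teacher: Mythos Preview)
Your proof is correct and follows essentially the same route as the paper. The paper deduces linearity of $\psi$ by invoking Corollary~\ref{co:1a} (upper semicontinuous $\Rightarrow$ measurable), while you make this explicit via the curve $t\mapsto tT$; for $c_0'=0$ the paper uses the pair $[-se_1,se_1]\to\{0\}$ and $[-se_1,se_1,-e_2,e_2]\to[-e_2,e_2]$, which differs from your sequences only cosmetically and exploits the same parity flip of $(-1)^{\dim P}$ across adjacent dimensions.
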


We remark that for upper semicontinuous and $\sln$ invariant valuations on $\cpnii$, that is, the set of convex polytopes that  contain the origin in their interiors,  a complete classification was established by Haberl \& Parapatits \cite{Haberl:Parapatits_centro}.  For homogeneous, measurable and $\sln$ invariant valuations such a result was established in \cite{Ludwig:origin}. Recently, Haberl \& Parapatits \cite{HaberlParapatits_moments} strengthened these results and obtained a complete classification of measurable and $\sln$ invariant valuations on $\cpnii$. 

\goodbreak

Next, we consider the space of all convex polytopes $\cP^n$. For $P\in\cP^n$, we write $[0,P]$ for the convex hull of the origin  and $P$.

\begin{theorem}\label{th:2}
A functional $\,\Psi:\cP^n \to \R$ is an $\,\sln$ invariant valuation if and only if there are constants $c_0, c_0', d_0\in\R$ and  solutions $\phi,\psi:[0,\infty)\to \R$ of Cauchy's functional equation such that 
$$\Psi (P)=  c_0 \,V_0(P) + c_0'\, (-1)^{\dim P\!}  \1_{\relint P} (0) + \psi\big(V_n(P)\big)+d_0 \1_{P}(0)
+ \phi\big( V_n([0,P])\big) $$
for every $P\in\cP^n$.
\end{theorem}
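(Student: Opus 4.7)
The plan is to reduce the classification on $\cP^n$ to Theorem~\ref{th:1} on $\cpni$ and then to capture the additional freedom provided by polytopes that miss the origin. For sufficiency, each of the five summands is verified to be an $\sln$-invariant valuation: $V_0$ and $V_n$ are classical; the indicator functionals $P\mapsto\1_P(0)$ and $P\mapsto(-1)^{\dim P}\1_{\relint P}(0)$ are valuations by a standard Euler-type argument; and $P\mapsto V_n([0,P])$ is a valuation because $[0,P\cup Q]=[0,P]\cup[0,Q]$ always holds, and $[0,P\cap Q]=[0,P]\cap[0,Q]$ whenever $P\cup Q$ is convex (the intersection of a convex set with any line through the origin is a single interval, which forces this identity). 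Composing $V_n$ and $V_n\circ[0,\cdot]$ with a solution of Cauchy's functional equation preserves the valuation property, and $\sln$-invariance of each summand is immediate because $\sln$ fixes the origin.

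For necessity, I would restrict $\Psi$ to $\cpni$, apply Theorem~\ref{th:1} to obtain constants $c_0^{\ast},c_0^{\prime\ast}\in\R$ and a Cauchy solution $\psi^{\ast}$ so that $\Psi(Q)=c_0^{\ast}V_0(Q)+c_0^{\prime\ast}(-1)^{\dim Q}\1_{\relint Q}(0)+\psi^{\ast}(V_n(Q))$ for every $Q\in\cpni$, and define $E:\cP^n\to\R$ by the same formula on every polytope. By the sufficiency step $E$ is an $\sln$-invariant valuation, so $\Phi:=\Psi-E$ is an $\sln$-invariant valuation on $\cP^n$ that vanishes identically on $\cpni$. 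The theorem then reduces to showing that any such $\Phi$ must take the form
$$\Phi(P)=d\,\bigl(\1_P(0)-V_0(P)\bigr)+\phi\bigl(V_n([0,P])\bigr)-\phi\bigl(V_n(P)\bigr)$$
for some $d\in\R$ and some Cauchy solution $\phi$; once this is established, writing $\Psi=\Phi+E$ and setting $c_0=c_0^{\ast}-d$, $c_0'=c_0^{\prime\ast}$, $\psi=\psi^{\ast}-\phi$, $d_0=d$ yields the announced formula, because the displayed expression vanishes on $\cpni$.

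To classify such a $\Phi$, let $P\in\cP^n$ with $0\notin P$; then $[0,P]\in\cpni$ gives $\Phi([0,P])=0$. I would first triangulate $P$ into simplices chosen so that each cell shares a single facet with its cone from $0$, and then apply~\eqref{valu} iteratively to express $\Phi(P)$ as a signed sum of $\Phi$-values on polytopes in $\cpni$ (which vanish) and on polytopes of dimension strictly less than $\dim P$ that miss the origin. Induction on $\dim P$ reduces the problem to $0$-dimensional polytopes $\{v\}$ with $v\neq 0$, where $\sln$-transitivity on $\R^n\setminus\{0\}$ (valid since $n\ge 2$) forces $\Phi(\{v\})$ to be a single constant that becomes the parameter $-d$. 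The Cauchy equation for $\phi$ is produced by evaluating the resulting expression on a pair of $n$-simplices $P_1,P_2$ together with their convex union (arising from an $\sln$-equivariant subdivision of a single larger simplex), for which the nonnegative quantity $V_n([0,\cdot])-V_n(\cdot)$ behaves additively and hence forces $\phi(t+s)=\phi(t)+\phi(s)$.

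The principal obstacle is the decomposition step: the union of the faces of $P$ visible from $0$ is generally not convex, so the valuation identity cannot be applied in a single stroke. Either a careful preliminary triangulation of $P$, so that each piece has a unique visible facet from the origin, or a signed iterated dissection must be carried out, and at every stage one must check that the intermediate intersections lie in $\cpni$ or drop in dimension. Managing this combinatorial book-keeping while tracking through it the single surviving affine invariant $V_n([0,P])-V_n(P)$, and extracting from this the functional equation for $\phi$, is the delicate part.
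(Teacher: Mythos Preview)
Your overall architecture---restrict to $\cpni$, apply Theorem~\ref{th:1}, subtract the resulting expression, and analyze the residual valuation $\Phi$ that vanishes on $\cpni$---is exactly what the paper does. The sufficiency discussion is also fine (the paper takes this for granted too).

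There is, however, a genuine gap in your treatment of $\Phi$. You write that ``induction on $\dim P$ reduces the problem to $0$-dimensional polytopes'' and then that ``the Cauchy equation for $\phi$ is produced by evaluating the resulting expression on a pair of $n$-simplices.'' This mislocates where $\phi$ actually enters. The induction does \emph{not} collapse all the way to dimension~$0$: it splits into two regimes. For $\dim P\le n-2$ (and $0\notin\aff P$) every simplex is an $\sln$-image of the fixed simplex $[e_1,\dots,e_{k+1}]$, and the self-similarity under bisection forces $\Phi\equiv -d$; this is where your constant $d$ comes from. But at dimension $n-1$ this transitivity fails: the only $\sln$-invariant of an $(n-1)$-simplex $T$ with $0\notin\aff T$ is $V_n([0,T])$, and $\sln$ is transitive on such simplices with this quantity fixed. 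That is the level at which one \emph{defines} $\phi(v)=\Phi(T)+d$ for $v=V_n([0,T])$, and bisecting $T$ yields the Cauchy equation. Only then does the visible-facet decomposition of $[0,P]$ push the formula up to $n$-dimensional $P$.

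Your proposed route through $n$-simplices cannot produce $\phi$ directly: $\sln$ is \emph{not} transitive on $n$-simplices with prescribed $V_n(P)$ and $V_n([0,P])$ (the orbit space has dimension $n+1$, not $2$), so there is no a priori reason why $\Phi(P)$ should depend only on $V_n([0,P])-V_n(P)$. The additivity of that quantity under dissection is true, but it only gives Cauchy once you already know $\Phi$ is a function of it---and that fact has to be imported from dimension $n-1$. The paper makes this two-step structure explicit in Lemmas~\ref{l:n-2} and~\ref{l:n-1}; your sketch needs the same intermediate stop.
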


Taking into account that all measurable solutions of Cauchy's functional equation are linear immediately gives the following corollary.

\begin{corol}\label{co:1}
A functional $\,\Psi:\cP^n \to \R$ is a measurable and $\,\sln$ invariant valuation if and only if there are constants $c_0, c_0', c_n, d_0, d_n\in\R$  such that 
$$\Psi (P)=  c_0\, V_0(P) +c_0'\, (-1)^{\dim P\!}  \1_{\relint P} (0) + c_n\, V_n(P)+ d_0  \1_{P}(0) + d_n\, V_n([0,P]) $$
for every $P\in\cP^n$.
\end{corol}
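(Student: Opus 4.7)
The ``if'' direction is immediate from Theorem~\ref{th:2}: for any linear $\psi$ and $\phi$ each summand on the right-hand side is an $\sln$ invariant valuation by that theorem, and all are measurable, since $V_0$, $V_n$, $P\mapsto\1_P(0)$ and $P\mapsto V_n([0,P])$ are Hausdorff continuous or upper semicontinuous, while the measurability of $P\mapsto(-1)^{\dim P}\1_{\relint P}(0)$ is proved in Section~\ref{corollaries}.

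For the converse, my plan is to apply Theorem~\ref{th:2} to produce constants $c_0,c_0',d_0$ and Cauchy solutions $\psi,\phi:[0,\infty)\to\R$ with
\begin{equation*}
\Psi(P)= c_0 V_0(P) + c_0'(-1)^{\dim P}\1_{\relint P}(0) + \psi\bigl(V_n(P)\bigr) + d_0 \1_P(0) + \phi\bigl(V_n([0,P])\bigr),
\end{equation*}
and then invoke the classical fact that every measurable solution of Cauchy's equation on $[0,\infty)$ is linear. It therefore suffices to show that $\psi$ and $\phi$ are individually measurable; $c_n$ and $d_n$ can then be read off as their respective slopes. The strategy is to feed $\Psi$ two continuous one-parameter families of polytopes along which the two Cauchy terms decouple.

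To isolate $\phi$, I will fix a full-dimensional simplex $P_0$ with $0\notin P_0$ and a direction $v$ for which $0\notin P_0+tv$ for every $t\ge 0$, and set $P_t:=P_0+tv$. Along this curve all three indicator terms vanish, and $V_n(P_t)=V_n(P_0)$ is constant by translation invariance of Lebesgue measure, so $\Psi(P_t)$ differs from $\phi\bigl(V_n([0,P_t])\bigr)$ only by a constant. Since $t\mapsto V_n([0,P_t])$ is continuous, strictly increasing and unbounded above, measurability of $\Psi$ transfers to measurability of $\phi$ on an unbounded interval, which via the functional equation gives measurability on $[0,\infty)$ and hence linearity.

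To then isolate $\psi$, I will fix an $n$-dimensional $Q_0$ with $0\in\relint Q_0$ and take $Q_t:=tQ_0$ for $t>0$, so that $[0,Q_t]=Q_t$ and all discrete invariants are constant in $t$. Consequently $\Psi(Q_t)$ differs from $\psi\bigl(t^nV_n(Q_0)\bigr)+\phi\bigl(t^nV_n(Q_0)\bigr)$ by a constant, and since $\phi$ is already linear while $t\mapsto t^n V_n(Q_0)$ is a homeomorphism of $(0,\infty)$, the measurability of $\Psi$ along this family forces $\psi$ to be measurable and hence linear. The only real subtlety is the choice of these test families: measurability of $\Psi$ on the large space $\cP^n$ does not pass to $\psi$ and $\phi$ automatically through the collapsing functionals $V_n$ and $V_n\circ[0,\cdot]$, so an explicit decoupling of this kind is what the proof must supply.
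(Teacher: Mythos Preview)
Your route is exactly the one the paper takes, only more scrupulous: the paper's entire argument for this corollary is the single sentence ``taking into account that all measurable solutions of Cauchy's functional equation are linear immediately gives the following corollary,'' leaving implicit precisely the decoupling step you spell out. So the plan is sound, and your second family $Q_t=tQ_0$ works exactly as you describe.

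There is one small wrinkle in the execution of the first family. Under only the hypothesis $0\notin P_0+tv$ for all $t\ge 0$, the map $t\mapsto V_n([0,P_t])$ need \emph{not} be strictly increasing: in $\R^2$ take $P_0=[(1,-10),(1,10),(2,0)]$ and $v=e_2$; then $P_t\subset\{x_1\ge 1\}$ so $0\notin P_t$ for all $t$, yet a direct shoelace computation gives $V_2([0,P_t])=20$ for every $t\in[0,20]$. Unboundedness does survive (since $P_0$ is full-dimensional, $[0,P_t]$ contains cones of volume $\sim t$), and continuity plus unboundedness already lets you push the argument through via a Borel right inverse of $g(t)=V_n([0,P_t])$ on its range. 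But if you want genuine monotonicity, it is cleaner to drop the full-dimensional translate and test instead along $T_s:=s\,[e_1,\dots,e_n]$ for $s>0$: these are $(n-1)$-dimensional, so $V_n(T_s)=0$ kills the $\psi$-term outright, while $V_n([0,T_s])=s^n/n!$ is a homeomorphism of $(0,\infty)$, giving measurability of $\phi$ immediately.
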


\goodbreak
As in Corollary \ref{co:1b}, we also impose stronger assumptions on the valuations and obtain the following results.

\begin{corol}\label{co:2}
A functional $\,\Psi:\cP^n \to \R$ is an upper semicontinuous and $\,\sln$ invariant valuation if and only if there are constants $c_0,  c_n, d_n\in\R$ and $d_0\ge0$  such that 
$$\Psi (P)= c_0\,V_0(P) +  c_n\,V_n(P)+d_0 \1_{P}(0)+ d_n\,V_n([0,P])$$
for every $P\in\cP^n$.
\end{corol}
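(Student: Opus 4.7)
The plan is to invoke Corollary \ref{co:1} (since every upper semicontinuous function is Borel measurable) and then use Hausdorff-convergent test sequences, each chosen so that only the summand under examination is discontinuous, to force $c_0' = 0$ and $d_0 \geq 0$.

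For sufficiency, each of $V_0$, $V_n$, $\1_{(\cdot)}(0)$, and $V_n([0,\cdot])$ is $\sln$ invariant (every $g\in\sln$ fixes the origin) and a valuation (standard for $V_0$ and $V_n$; a direct verification of \eqref{valu} for the other two). Upper semicontinuity of the sum follows because $V_0$ is constant on nonempty polytopes, $V_n$ is continuous on $\cK^n$, the map $P\mapsto [0,P]=\mathrm{conv}(\{0\}\cup P)$ is continuous in the Hausdorff metric, and $\{P\in\cP^n:0\in P\}$ is closed so that $\1_{(\cdot)}(0)$ is upper semicontinuous; the hypothesis $d_0\ge 0$ is exactly what keeps $d_0\1_{(\cdot)}(0)$ upper semicontinuous.

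For necessity, Corollary \ref{co:1} produces
\[
\Psi(P) = c_0 V_0(P) + c_0'\,(-1)^{\dim P} \1_{\relint P}(0) + c_n V_n(P) + d_0 \1_P(0) + d_n V_n([0,P]),
\]
and I still have to kill $c_0'$ and sign $d_0$. To show $c_0'\ge 0$, I take $P=\{0\}$ and $P_k=\mathrm{conv}\{-e_1/k,\,e_1/k\}$: then $P_k\to P$, the parity indicator $(-1)^{\dim}\1_{\relint}(0)$ flips from $+1$ to $-1$, and every other summand agrees for all $k$, so upper semicontinuity at $P$ yields $-c_0'\le c_0'$. To show $c_0'\le 0$, I take $P=\mathrm{conv}\{-e_1,e_1\}$ and $P_k=\mathrm{conv}\{\pm e_1,\,\pm e_2/k\}$ (a planar rhombus through the origin): now the parity indicator flips from $-1$ to $+1$, while $V_n(P_k)\to 0=V_n(P)$ and $V_n([0,P_k])=V_n(P_k)\to 0$ (since $0\in P_k$), and $\1_{P_k}(0)=\1_P(0)=1$; upper semicontinuity yields $c_0'\le -c_0'$, so $c_0'=0$.

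With $c_0'$ eliminated, the only remaining potentially discontinuous summand is $d_0\1_{(\cdot)}(0)$. Taking $P=\{0\}$ and $P_k=\{e_1/k\}$ gives $P_k\to P$, $0\in P$ but $0\notin P_k$, and all other summands vanish or equal $c_0$ along the sequence, so upper semicontinuity reduces to $c_0\le c_0+d_0$, i.e.\ $d_0\ge 0$. The main subtlety throughout is the simultaneous bookkeeping of all five summands along each test sequence, but this is exactly why the reduction to Corollary \ref{co:1} is helpful: once the Cauchy solutions $\psi,\phi$ of Theorem \ref{th:2} are known to be linear, every summand except the two I am targeting is Hausdorff-continuous at the limit.
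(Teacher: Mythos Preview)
Your proof is correct and follows essentially the same route as the paper: reduce to Corollary~\ref{co:1} via measurability, then use the test sequences $[-se_1,se_1]\to\{0\}$ and a shrinking rhombus $\to[-e_1,e_1]$ to force $c_0'=0$ (these are exactly the paper's sequences, up to swapping $e_1\leftrightarrow e_2$), and finally note that $d_0\1_{(\cdot)}(0)$ is upper semicontinuous iff $d_0\ge0$. You are slightly more explicit than the paper in exhibiting the sequence $\{e_1/k\}\to\{0\}$ for the necessity of $d_0\ge0$ and in tracking the continuous summands along each sequence, but the argument is the same.
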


\begin{corol}\label{co:3}
A functional $\,\Psi:\cP^n \to \R$ is a continuous and $\,\sln$ invariant valuation if and only if there are constants $c_0,  c_n, d_n\in\R$   such that 
$$\Psi (P)= c_0\,V_0(P) +  c_n\,V_n(P)+ d_n\,V_n([0,P])$$
for every $P\in\cP^n$.
\end{corol}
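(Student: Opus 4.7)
The plan is to deduce Corollary \ref{co:3} directly from Corollary \ref{co:2} by exploiting the extra regularity. Since every continuous functional is in particular upper semicontinuous, Corollary \ref{co:2} immediately supplies constants $c_0, c_n, d_n \in \R$ and $d_0 \ge 0$ such that
$$\Psi(P) = c_0\, V_0(P) + c_n\, V_n(P) + d_0\, \1_{P}(0) + d_n\, V_n([0,P])$$
for every $P \in \cP^n$. The only remaining task for the forward direction is therefore to show that full continuity forces $d_0 = 0$.

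To force $d_0 = 0$, I would evaluate $\Psi$ along a Hausdorff-convergent sequence whose limit contains the origin while its members do not. The simplest choice is the sequence of singletons $P_k = \{x/k\}$ for any fixed nonzero $x \in \R^n$, converging to $P = \{0\}$. For this sequence the values of $V_0$, $V_n$ and $V_n([0,\cdot])$ at $P_k$ and $P$ coincide (they are $1$, $0$ and $0$ respectively), whereas $\1_{P_k}(0) = 0$ and $\1_{P}(0) = 1$. Consequently $\Psi(P_k) = c_0$ and $\Psi(P) = c_0 + d_0$, so continuity of $\Psi$ at $P$ forces $d_0 = 0$.

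For the converse direction, each of the three summands $c_0\,V_0$, $c_n\,V_n$ and $d_n\,V_n([0,\cdot])$ has already been exhibited in Theorem \ref{th:2} as an $\sln$ invariant valuation. Continuity is classical for $V_0$ and $V_n$, and follows for $P \mapsto V_n([0,P])$ from the Hausdorff continuity of the map $P \mapsto [0,P]$ (immediate from the definition of the Hausdorff metric and of convex hulls) together with the continuity of $V_n$ on $\cK^n$.

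The main obstacle has essentially been discharged by the preceding results: the Cauchy-equation pieces $\psi$ and $\phi$ of Theorem \ref{th:2} have been linearised via measurability, and the discontinuous discrete term $c_0'\,(-1)^{\dim P}\,\1_{\relint P}(0)$ has been eliminated by upper semicontinuity in Corollary \ref{co:2}. The only genuinely new ingredient needed for Corollary \ref{co:3} is the one-point test sequence above, which separates the surviving indicator term $d_0\,\1_P(0)$ from the four continuous building blocks.
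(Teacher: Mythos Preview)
Your proposal is correct and follows the paper's approach: deduce the result from Corollary~\ref{co:2} via the implication continuous $\Rightarrow$ upper semicontinuous, and then eliminate the $d_0\,\1_P(0)$ term. The paper actually leaves this last step implicit (Section~\ref{corollaries} does not spell out an argument for Corollary~\ref{co:3}); your singleton sequence $\{x/k\}\to\{0\}$ is a clean way to fill that gap, and an equally short alternative would be to apply Corollary~\ref{co:2} to $-\Psi$ to get $-d_0\ge 0$.
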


We remark that deducing Theorem \ref{th:2} from Theorem \ref{th:1} is similar to the corresponding step for convex-body valued valuations. Classification results for convex-body valued valuations intertwining $\sln$ were first established on $\cK_0^n$ in \cite{Ludwig:Minkowski} (also see \cite{Haberl_sln}) and then extended to classification results on $\cK^n$ by Schuster \& Wannerer \cite{Schuster:Wannerer} and Wannerer~\cite{Wannerer2011}.

\section{Notation and Preliminaries}\label{tools}

We work in $n$-dimensional Euclidean space, $\R^n$, and denote its standard basis by $e_1,\ldots,e_n$. 
We write $\lin$ for linear hull and $[v_1, \dots, v_i]$ for the convex hull of $ v_1, \dots, v_i\in\R^n$.

\goodbreak
A valuation $\Psi:\cP^n\to\R$ can be extended to a valuation on finite unions of convex polytopes 
such that the inclusion-exclusion principle holds  
(cf.\ \cite{Klain:Rota} or \cite[Theorems 6.2.1 and 6.2.3]{Schneider:CB2}), that is,
for $P_1,\dots, P_m\in\cP^n$,
\begin{equation}\label{inex}
\Psi(P_1\cup\dots\cup P_m) = \sum_{j=1}^{m} (-1)^{j-1} \sum_{1 \leq i_1 < \dots < i_j \leq m} \Psi(P_{i_1} \cap \dots \cap P_{i_j}).
\end{equation}
This is also called {\em finite additivity}.

Let $Q$ be a finite union of $k$-dimensional polytopes. 
We define a triangulation of $Q$ into simplices as a set of $k$-dimensional simplices $\{T_1, \dots, 
T_m\}$ which have pairwise disjoint interiors, with $Q=\bigcup T_i $ and with the property that for 
arbitrary $1 \leq i_1 < \dots < i_j \leq m$ the 
intersections $T_{i_1} \cap \dots \cap T_{i_j}$ are again simplices. This guarantees that when making use of the inclusion-exclusion principle in this setting, on the right hand side only simplices occur.

We also require the following special case of a result by Haberl \cite[Lemma~3.2]{Haberl:star} (or see \cite[Lemma 2]{Haberl_sln}) for simple valuations, that is, for valuations which vanish on lower dimensional sets. 

\begin{lemma}[Haberl \cite{Haberl:star}]\label{christoph}
If $\,\Psi: \cpni \to \R$ is a simple valuation and $\Psi(T)=0$ for every $n$-dimensional simplex $T$ with one vertex at the origin, then $\Psi(P)=0$ for every $P\in\cpni$.
\end{lemma}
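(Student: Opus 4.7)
The plan is to reduce, via the inclusion-exclusion principle (\ref{inex}), to the case of $n$-simplices with a vertex at the origin, where the hypothesis applies directly.

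Since $\Psi$ is simple, it vanishes on polytopes of dimension strictly less than $n$, so I only need to prove $\Psi(P)=0$ for every $n$-dimensional $P\in\cpni$. The heart of the argument will be to construct a triangulation of such a $P$ into $n$-dimensional simplices $T_1,\dots,T_m$ such that each $T_i$ has the origin as one of its vertices, and such that for all $1\le i_1<\cdots<i_j\le m$ with $j\ge2$ the intersection $T_{i_1}\cap\cdots\cap T_{i_j}$ is a simplex of dimension strictly less than $n$. If the origin lies in the interior of $P$, I would triangulate $\partial P$ into $(n-1)$-simplices and cone each of them to the origin. If the origin lies on $\partial P$, I would first choose any triangulation of $P$ whose vertex set contains the origin, and then perform a stellar subdivision from the origin, splitting every $n$-simplex of that triangulation that does not already contain $0$ as a vertex by coning from $0$ to the facets of the simplex visible from $0$. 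The outcome is a triangulation of $P$ with the desired intersection property in which every top-dimensional simplex has the origin as a vertex.

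Once such a triangulation is available, the finite additivity identity (\ref{inex}) applied to $P=T_1\cup\cdots\cup T_m$ gives
\[
\Psi(P) = \sum_{j=1}^{m}(-1)^{j-1}\sum_{1\le i_1<\cdots<i_j\le m}\Psi(T_{i_1}\cap\cdots\cap T_{i_j}).
\]
For $j\ge 2$ every intersection is a simplex of dimension less than $n$, so the corresponding term vanishes by simplicity of $\Psi$; for $j=1$ each $\Psi(T_i)$ vanishes by the assumption on $n$-simplices with a vertex at $0$. Therefore $\Psi(P)=0$.

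The hard part is the triangulation step when $0$ lies on $\partial P$: I have to verify that the stellar subdivision from $0$ can be carried out so that every top-dimensional simplex of the refined complex has $0$ as a vertex, while retaining the face-to-face intersection property needed to invoke (\ref{inex}). Once the combinatorics of this refinement are pinned down, the remainder of the argument is mechanical.
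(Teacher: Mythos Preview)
The paper does not prove this lemma; it is quoted as a special case of a result of Haberl \cite[Lemma~3.2]{Haberl:star}, so there is no in-paper argument to compare against beyond the citation. Your overall plan---triangulate $P$ into $n$-simplices each having $0$ as a vertex and then combine inclusion--exclusion with simplicity---is the standard route and is essentially how the cited proof proceeds.

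However, your construction for the case $0\in\partial P$ does not work as written. Once you fix a triangulation of $P$ whose vertex set already contains $0$, any $n$-simplex $T$ of that complex which does not have $0$ as a vertex does not contain $0$ at all (because $0$ is a vertex of the simplicial complex). ``Coning from $0$ to the facets of $T$ visible from $0$'' then produces simplices lying in $[0,T]$ rather than in $T$, so the refined collection is no longer a subdivision of $P$; moreover, $\Psi$ is defined only on $\cpni$, so $\Psi(T)$ is not even available for such a $T$ in any intermediate step. The clean fix is to bypass the auxiliary triangulation altogether: triangulate simultaneously all facets $F$ of $P$ with $0\notin\aff F$ (compatibly on their common $(n-2)$-faces) and set $T_i=[0,S_i]$ for each resulting $(n-1)$-simplex $S_i$. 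Since $P=\bigcup_{0\notin\aff F}[0,F]$, the $T_i$ form a triangulation of $P$ in the sense required for (\ref{inex}), each has $0$ as a vertex, and your concluding paragraph then goes through verbatim.
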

\goodbreak

\section{Proof of Theorem~\ref{th:1}}\label{pr:1}

First, we check that $\Psi: \cpni\to \R$ defined by $\Psi(P)=(-1)^{\dim P\!}  \1_{\relint P} (0)$ is 
a valuation, that is, we check that (\ref{valu}) holds for all $P,Q\in\cpni$ with $P\cup Q \in 
\cpni$. Note that (\ref{valu}) holds if $P\subseteq Q$ or $Q\subseteq P$. If there is no inclusion, 
then $P\cup Q \in \cpni$ implies that $P$ and $Q$ have the same affine hull and hence $\dim P=\dim 
Q=\dim(P\cup Q)$. If $0\in \relint P$ and $0\in \relint Q$, then $0\in \relint(P\cap Q)$ and $0\in 
\relint(P\cup Q)$ and  therefore (\ref{valu}) holds. If $0\in \relint P$ and $0\not\in\relint Q$ (or 
vice versa), then $0\in \relint(P\cup Q)$ and $0\not\in \relint (P\cap Q)$ and therefore 
(\ref{valu}) holds. If $0\not\in\relint P$ and $0\not\in\relint Q$ while $0\not\in\relint (P\cup 
Q)$, then $0\not\in\relint(P\cap Q)$ and therefore (\ref{valu}) holds. Finally, if $0\not\in\relint 
P$ and $0\not\in\relint Q$ while $0\in\relint (P\cup Q)$, then $0\in\relint(P\cap Q)$ and 
$\dim(P\cup Q)= \dim(P\cap Q) +1$ and therefore (\ref{valu}) holds. Thus $\Psi$ is a valuation and 
it clearly is $\sln$ invariant. Hence 
for $c_0,c_0'\in\R$ and $\psi:[0,\infty)\to \R$ a solution of Cauchy's functional equation 
$$P \mapsto  c_0\,V_0(P) + c_0' \,(-1)^{\dim P\!}  \1_{\relint P} (0) +  \psi\big(V_n(P)\big)  $$
is an $\sln$ invariant valuation on $\cpni$.
We have to show that every $\sln$ invariant valuation $\Psi: \cpni\to\R$ is of such form.

For $i=0, \dots , n$, let $\cT^i$ be the set of $i$-dimensional simplices $T \subset \R^n$ with one vertex at the origin $0$. 

\begin{lemma}\label{le:simplex}
If\, $\Psi: \cpni\to \R$ is an $\sln$ invariant valuation, then there are a constant $c_0\in\R$ and a solution $\psi: [0,\infty) \to \R$ of Cauchy's functional equation such that
$$ \Psi(T) = c_0\, V_0(T)  + \psi\big(V_n(T)\big) $$
for every $\,T\in \cT^1 \cup\dots\cup \cT^n$.
\end{lemma}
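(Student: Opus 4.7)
The plan is to exploit two ingredients: first the transitivity of the $\sln$-action on certain orbits of simplices, and second the valuation identity applied to a single edge-subdivision of a simplex with one vertex at the origin.

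First I would analyze the orbits of $\sln$ on each $\cT^i$. For $T=[0,v_1,\dots,v_n]\in\cT^n$, any $\sln$-map $A$ sending $v_j\mapsto w_j$ has $\det A=\pm 1$, and we can swap two of the target vectors if needed to force $\det A=1$; hence two simplices in $\cT^n$ lie in the same $\sln$-orbit if and only if they have the same $n$-dimensional volume. For $T=[0,v_1,\dots,v_i]\in\cT^i$ with $i<n$, one extends $v_1,\dots,v_i$ and the corresponding target vertices $w_1,\dots,w_i$ to bases of $\R^n$ and adjusts the extra vectors to make the determinant $1$; this shows $\sln$ acts transitively on $\cT^i$. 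Consequently, $\Psi(T)=c_i$ is constant on $\cT^i$ for each $i=1,\dots,n-1$, while on $\cT^n$ there is a function $f:(0,\infty)\to\R$ with $\Psi(T)=f(V_n(T))$.

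Next I would apply the valuation property to an edge subdivision. Take $T=[0,v_1,\dots,v_i]\in\cT^i$, pick a point $w=(1-\lambda)v_1+\lambda v_2$ with $\lambda\in(0,1)$, and set
\[T_1=[0,v_1,w,v_3,\dots,v_i],\qquad T_2=[0,w,v_2,v_3,\dots,v_i].\]
Both lie in $\cT^i$, their union is $T$, and $T_1\cap T_2=[0,w,v_3,\dots,v_i]\in\cT^{i-1}$; all four sets belong to $\cpni$, so \eqref{valu} yields
\[\Psi(T_1)+\Psi(T_2)=\Psi(T)+\Psi(T_1\cap T_2).\]
For $2\le i\le n-1$ this reads $2c_i=c_i+c_{i-1}$, giving $c_1=c_2=\dots=c_{n-1}$; call this common value $c_0$. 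For $i=n$, setting $s=V_n(T)$, the identity becomes
\[f(\lambda s)+f((1-\lambda)s)=f(s)+c_0.\]

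Finally I would set $\psi(s):=f(s)-c_0$ for $s>0$ and $\psi(0):=0$. Since the pair $(\lambda s,(1-\lambda)s)$ ranges over all pairs of positive reals as $\lambda\in(0,1)$ and $s>0$ vary, the relation above becomes $\psi(a)+\psi(b)=\psi(a+b)$ for all $a,b>0$, and the extension $\psi(0)=0$ makes this hold on $[0,\infty)$. Checking the formula $\Psi(T)=c_0V_0(T)+\psi(V_n(T))$ in both cases (where $V_n(T)=0$ if $\dim T<n$) completes the argument. The only step requiring real care is verifying $\sln$-transitivity on $\cT^i$ for $i<n$ and making sure the subdivided pieces $T_1,T_2,T_1\cap T_2$ all remain in $\cpni$; once these are in hand, the Cauchy relation and the chain $c_1=\dots=c_{n-1}$ fall out of a single application of the inclusion–exclusion identity.
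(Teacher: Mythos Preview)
Your proposal is correct and follows essentially the same route as the paper's proof: both use $\sln$-transitivity on $\cT^i$ for $i<n$ to reduce $\Psi$ to constants $c_i$ there and to a function of volume on $\cT^n$, and then apply the valuation identity to a single subdivision of a simplex into two to obtain $c_i=c_{i-1}$ and, in top dimension, the Cauchy relation for $\psi=f-c_0$. The only cosmetic difference is that you parametrize the subdivision explicitly via $w=(1-\lambda)v_1+\lambda v_2$, whereas the paper simply asserts the existence of such $T_1,T_2$.
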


\begin{proof}
Note that for $k \leq n-1$, any simplex $T \in \cT^k$ is an $\sln$ image of the simplex $[0, e_1, \dots, e_k]$.
Set 
$$ c_0 = \Psi([0, e_1]) . $$
We show that
\begin{equation}\label{ind}
\Psi(T)=c_0\,\,\, \mbox{ for all } \,\,T \in \cT^1 \cup \dots \cup \cT^{n-1}.
\end{equation}
By definition this holds true for $T \in \cT^1$. 

For $n\geq 3$, assume $T_1, T_2, T_1 \cup T_2 \in \cT^{2}$ and $T_1\cap T_2\in \cT^1$. By the additivity of $\Psi$, we have
$$\Psi(T_1) + \Psi(T_2) = \Psi(T_1 \cup T_2) + \Psi(T_1 \cap T_2) .$$
Since all simplices in $\cT^{2}$ are $\sln$ images of $[0, e_1, e_2]$, we obtain
$$\Psi([0,e_1, e_{2}])= \Psi(T_1 \cap T_2) = \Psi([0, e_1]) = c_0.$$
We continue by induction. Assume that for $k \leq n-2$ we already know that 
$\Psi(T)=c_0$ for all $T \in \cT^1 \cup \dots \cup \cT^{k-1}$. 
Further,  let  $T_1, T_2, T_1 \cup T_2 \in \cT^{k}$ and $T_1\cap T_2\in\cT^{k-1}$. By the additivity of $\Psi$, we have
$$\Psi(T_1) + \Psi(T_2) = \Psi(T_1 \cup T_2) + \Psi(T_1 \cap T_2) .$$
Since all simplices in $\cT^{k}$ are $\sln$ images of $[0, e_1, \dots, e_{k}]$ for $k \leq n-1$, we obtain
$$\Psi([0,e_1, \dots, e_{k}])= \Psi(T_1 \cap T_2) = \Psi([0, e_1, \dots, e_{k-1}]) = c_0.$$
This proves (\ref{ind}).

\goodbreak
In the last step, let $T \in \cT^{n}$.
There is  $A \in\sln$ such that 
$$A T= [0, e_1, \dots, e_{n-1},n! v\, e_n]  $$
where $v= V_n(T)$. Define $\alpha:[0,\infty)\to \R$ by
$$\alpha(v)= \Psi([0, e_1, \dots, e_{n-1}, n! v\, e_n]) $$
and note that  by the $\sln$ invariance $\Psi(T)=  \alpha\big(V_n(T)\big)$ for all  $T \in \cT^n$.
\goodbreak

For $x, y\ge 0$, let $T\in \cT^n$ be such that $x+y =V_n(T)$. We choose $ T_1, T_2 \in\cT^n$  such 
that  $x= V_n(T_1)$ and $y= V_n(T_2)$ 
while $T=T_1\cup T_2$ and $T_1, T_2$ have disjoint interiors. It follows that
$$\alpha(x+y)=\Psi(T) = \Psi(T_1) + \Psi(T_2) - \Psi(T_1 \cap T_2) = \alpha(x)+\alpha(y) - c_0.$$
Thus $\alpha- c_0$ is a solution, say $\psi:[0,\infty)\to \R$, of Cauchy's functional equation. 
Thus we obtain
$$ \Psi(T) =c_0\, V_0(T) + \psi \big(V_n(T)\big) $$
which proves the lemma. 
\end{proof}
\goodbreak

Let $c_0$ and  $\psi$ be as in Lemma~\ref{le:simplex} and set $c_0'= \Psi(\{0\})-c_0$.
Define  $\Psi': \cpni\to \R$ as
$$ \Psi' = \Psi-  c_0\, V_0 - \psi\circ V_n - c_0' (-1)^{\dim P} \1_{\relint P}(0). $$
Note that $\Psi'$ is an  $\sln$ invariant valuation, that $\Psi'(\{0\})=0$ and that $\Psi'$ vanishes by Lemma~\ref{le:simplex} on $\cT^1\cup\dots\cup\cT^{n}$. The following lemma completes the proof of the theorem.

\begin{lemma}
The valuation $\Psi'$ vanishes on $\cpni$.
\end{lemma}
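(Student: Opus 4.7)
My plan is to invoke Lemma~\ref{christoph} (Haberl). Recall that $\Psi'$ is an $\sln$ invariant valuation satisfying $\Psi'(\{0\}) = 0$ and vanishing on $\cT^1 \cup \cdots \cup \cT^n$ by Lemma~\ref{le:simplex}. Lemma~\ref{christoph} asserts that a simple $\sln$ invariant valuation vanishing on $\cT^n$ must vanish on all of $\cpni$. Thus it suffices to prove that $\Psi'$ is \emph{simple}, that is, that $\Psi'(P) = 0$ whenever $\dim P < n$.

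To this end, fix $P \in \cpni$ with $k = \dim P < n$. Since $0 \in P$, the affine hull of $P$ is a $k$-dimensional linear subspace of $\R^n$. I construct a triangulation of $P$ into $k$-simplices with the origin as a common vertex as follows: triangulate each facet of $P$ (in $\aff P$) not containing $0$ into compatible $(k-1)$-simplices, and cone each such boundary simplex to the origin. Since $P$ is star-shaped at $0$, the resulting cones cover $P$ with pairwise disjoint relative interiors, and any $j$-fold intersection equals the cone from $0$ over the corresponding intersection of boundary simplices, hence is itself a simplex with a vertex at $0$, of dimension at most $k-1$ when $j \ge 2$. Applying the inclusion-exclusion formula (\ref{inex}) to this triangulation, $\Psi'(P)$ becomes a signed sum of values $\Psi'(S)$ where $S$ is either $\{0\}$ or lies in $\cT^1 \cup \cdots \cup \cT^k$. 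Since all of these vanish by construction, $\Psi'(P) = 0$.

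This establishes that $\Psi'$ is simple; combined with the vanishing of $\Psi'$ on $\cT^n$ and its $\sln$ invariance, Lemma~\ref{christoph} yields $\Psi' \equiv 0$ on $\cpni$. The one point requiring care is verifying that the described coning procedure really produces a proper triangulation in the sense of Section~\ref{tools} when $0$ lies on the relative boundary of $P$: certain facets of $P$ then contain $0$ and must be excluded, but star-shapedness at $0$ still ensures that the cones over the remaining facets cover $P$ and assemble into a compatible simplicial decomposition whose multi-fold intersections are themselves simplices with $0$ as a vertex.
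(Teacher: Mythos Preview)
Your argument is correct but follows a different route from the paper's. The paper proceeds by induction on $k=\dim P$: assuming $\Psi'$ vanishes on all polytopes of dimension $<k$ makes the restriction of $\Psi'$ to any $k$-dimensional linear subspace a simple valuation there, and then Lemma~\ref{christoph} (applied inside that subspace, with $n$ replaced by $k$) forces it to vanish. Thus the paper invokes Haberl's lemma once in each dimension.

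You instead handle all dimensions $k<n$ in one stroke by an explicit cone-triangulation from the origin, using only that $\Psi'$ vanishes on $\{0\}\cup\cT^1\cup\cdots\cup\cT^{n-1}$, and then apply Lemma~\ref{christoph} a single time in the ambient dimension $n$. The geometric verification you flag at the end (that coning over the facets not containing $0$ still yields a genuine triangulation whose $j$-fold intersections equal $[0,\,S_{i_1}\cap\cdots\cap S_{i_j}]$) does go through: since each $S_i$ lies in a facet whose supporting hyperplane misses $0$, any nonzero point of $[0,S_{i_1}]\cap[0,S_{i_2}]$ determines a unique exit point of the ray from $0$, forcing $s_1=s_2\in S_{i_1}\cap S_{i_2}$; with a compatible boundary triangulation these intersections are common faces, hence simplices. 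In fact your triangulation argument, run verbatim for $k=n$, \emph{is} a proof of Lemma~\ref{christoph} (the $j=1$ terms lie in $\cT^n$ and the $j\ge 2$ terms are lower-dimensional, hence killed by simplicity), so you could have dispensed with citing Haberl altogether. The paper's version is shorter on the page because it outsources this geometry to the cited lemma; yours is more self-contained. One minor point: Lemma~\ref{christoph} as stated does not assume $\sln$ invariance, so that qualifier in your description of it is superfluous.
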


\begin{proof}
Since $\Psi'$ vanishes on $\cT^0\cup \cT^1$,  for $s,t>0$ we have
$ \Psi'([0,t e_1])= 0 $
and 
$$ \Psi'([-s e_1,t e_1])= \Psi'([-s e_1,0]) + \Psi'([0,t e_1])  = 0 . $$
Thus $\Psi'$ vanishes on all at most 1-dimensional polytopes  in $\cpni$. 

We proceed by induction on $k=\dim P$ and assume that $\Psi'(P)=0$ holds for all at most $(k-1)$-dimensional $P\in\cpni$. Hence $\Psi'$ is simple when restricted to  polytopes in $\cpni$ in a $k$-dimensional subspace. Since $\Psi'$ vanishes on $\cT^k$, Lemma~\ref{christoph} implies that $\Psi'$ vanishes on all polytopes in $\cpni$ in this $k$-dimensional subspace.  Hence $\Psi'$ vanishes on all at most $k$-dimensional polytopes in $\cpni$. This completes the proof of the lemma.
\end{proof}

\section{Proof of Theorem \ref{th:2}}
It is easy to check (as in the proof of Theorem \ref{th:1}) that $P\mapsto (-1)^{\dim P\!}  \1_{\relint P} (0)$ is a valuation on $\cP^n$  and also $P\mapsto \1_{P}(0)$ is a valuation on $\cP^n$. They clearly are $\sln$ invariant. Hence 
for $c_0, c_0', d_0\in\R$ and  $\phi,\psi:[0,\infty)\to \R$ solutions of Cauchy's functional equation  
$$P\mapsto c_0 \,V_0(P) + c_0'\, (-1)^{\dim P\!}  \1_{\relint P} (0) + \psi\big(V_n(P)\big)+d_0 \1_{P}(0)
+ \phi\big( V_n([0,P])\big) $$
is an $\sln$ invariant valuation on $\cP^n$.
We have to show that every $\sln$ invariant valuation $\Psi: \cP^n\to\R$ is of such form.

%Let $\Psi$ be an $\sln$ invariant valuation on $\cP^n$.  
We apply Theorem \ref{th:1} to the restriction of $\Psi$ to $\cpni$ and obtain $a_0, a_0'\in\R$ and a solution $\alpha:[0,\infty)\to\R$ of Cauchy's functional equation such that
\begin{equation*}\label{zwei}
\Psi(P)= a_0\,V_0(P) + a_0' \,(-1)^{\dim P\!}  \1_{\relint P} (0) +  \alpha\big(V_n(P)\big)
\end{equation*}
for $P\in\cpni$.
Set $b_0 = \Psi(\{e_1\})$.
Define $\Psi':\cP^n\to\R$ by
$$\Psi' (P)= \Psi(P) 
- a_0\,\1_P (0) - a_0'\, (-1)^{\dim P\!}  \1_{\relint P} (0) - b_0 \1_{P^c}(0)
- \alpha\big(V_n(P)\big), $$
where $P^c$ is the complement of $P$ in $\R^n$.
Note that $\Psi'$ is an $\sln$ invariant valuation on $\cP^n$ which vanishes on $\cpni$.

First we consider $\Psi'$ on at most $(n-2)$-dimensional polytopes.

\begin{lemma}\label{l:n-2}
The valuation $\Psi'$ vanishes on every polytope $P \in \cP^n$ with $\dim P \leq n-2$.
\end{lemma}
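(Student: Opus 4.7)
I would prove the lemma by induction on $k = \dim P$. For the base case $k = 0$: if $P = \{0\}$ then $P \in \cpni$ and $\Psi'(P) = 0$, while for $P = \{v\}$ with $v \ne 0$ the group $\sln$ acts transitively on $\R^n \setminus \{0\}$, so $\Psi'(\{v\}) = \Psi'(\{e_1\})$; and $\Psi'(\{e_1\}) = 0$ follows directly from the definition of $\Psi'$, the choice $b_0 = \Psi(\{e_1\})$, and $\alpha(0) = 0$ (a consequence of Cauchy's equation).

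Now assume that $\Psi'$ vanishes on polytopes of dimension less than $k$ for some $k$ with $1 \leq k \leq n-2$, and let $\dim P = k$. If $0 \in P$ then $P \in \cpni$ and there is nothing to prove. If $0 \notin P$ but $0 \in \aff P$, the polytope $[0,P]$ lies in $\aff P$, has dimension $k$, and contains the origin, so $\Psi'([0,P]) = 0$. Writing $\mathcal F$ for the collection of facets of $P$ that are visible from the origin, I would decompose $[0,P] = P \cup \bigcup_{F \in \mathcal F} [0,F]$ and apply the inclusion-exclusion formula \eqref{inex}. Every polytope of the form $[0, F_{i_1} \cap \cdots \cap F_{i_s}]$ appearing on the right-hand side contains the origin, so its $\Psi'$-value is $0$; every intersection of $P$ with several $[0, F_i]$'s equals a face $F_{i_1} \cap \cdots \cap F_{i_s}$ of dimension at most $k-1$, and so its $\Psi'$-value vanishes by induction. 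The only surviving term is $\Psi'(P)$, hence $\Psi'(P) = 0$.

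The remaining subcase, $0 \notin \aff P$, is the crux. By triangulating $P$ into $k$-simplices with the same affine hull and using the induction hypothesis to conclude that $\Psi'$ is simple below dimension $k$, it suffices to show $\Psi'(T) = 0$ for every $k$-simplex $T = [v_0, \ldots, v_k]$ with $0 \notin \aff T$. For this I would use midpoint subdivision: with $w = (v_0 + v_1)/2$, set $T_1 = [v_0, w, v_2, \ldots, v_k]$ and $T_2 = [w, v_1, v_2, \ldots, v_k]$, so that $T = T_1 \cup T_2$ and $T_1 \cap T_2 = [w, v_2, \ldots, v_k]$ is a $(k-1)$-simplex on which $\Psi'$ vanishes by induction.

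The main obstacle, and the point at which the hypothesis $k \leq n-2$ is essential, is to realize every affine bijection of $\aff T$ as the restriction of an $\sln$ map. After normalizing by $\sln$ so that $\aff T = e_n + \lin\{e_1, \ldots, e_k\}$, an affine map $e_n + x \mapsto e_n + Mx + b$ (with $M \in \operatorname{GL}(k)$ and $b \in \lin\{e_1, \ldots, e_k\}$) extends to some $A \in \sln$ by setting $A(e_i) = M e_i$ for $i \leq k$, $A(e_n) = e_n + b$, and choosing $A|_{\lin\{e_{k+1}, \ldots, e_{n-1}\}}$ to be a linear isomorphism of determinant $1/\det M$, which is possible precisely because $n - k - 1 \geq 1$. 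Since the affine group of $\aff T$ acts transitively on $k$-simplices of $\aff T$, both $T_1$ and $T_2$ are $\sln$-equivalent to $T$, giving $\Psi'(T_1) = \Psi'(T_2) = \Psi'(T)$. The valuation identity then reads $\Psi'(T) = 2\Psi'(T)$, forcing $\Psi'(T) = 0$ and completing the induction.
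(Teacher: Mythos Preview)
Your proof is correct and follows essentially the same approach as the paper: induction on $\dim P$, the halving argument $\Psi'(T)=2\Psi'(T)$ for $k$-simplices with $0\notin\aff T$, triangulation plus inclusion--exclusion for general polytopes with $0\notin\aff P$, and the decomposition $[0,P]=P\cup\bigcup[0,F_i]$ over visible facets for the case $0\in\aff P\setminus P$. The only cosmetic differences are the order of the cases (the paper treats the simplex case first) and that the paper invokes $\sln$-transitivity by mapping every such $k$-simplex onto $[e_1,\dots,e_{k+1}]$, whereas you instead extend affine automorphisms of $\aff T$ to $\sln$ using the spare directions $e_{k+1},\dots,e_{n-1}$; both arguments encode the same fact and both use $k\le n-2$ in the same way.
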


\begin{proof}
Note that $\Psi'$ vanishes already on $\cpni$ and thus we have to take care of polytopes $P$ in $\cP^n \setminus \cpni$.
We prove the statement by induction on $k=\dim P$. 
For $k=0$, we have $\Psi'(\{x\})=\Psi'(\{e_1\})=0$ for $x \neq 0$.
Assume $ \Psi'(P)= 0$ for all $P\in\cP^n$ with $\dim P\leq k-1$. We prove the statement for $\dim P =k\le n-2$.

First, let  $T$ be a $k$-dimensional simplex and $0 \notin \aff T$. There is a special linear map from $T$ onto $[e_1, e_2, \dots,  e_{k+1}]$ and if we dissect $T$ into two $k$-dimensional simplices $T_1$ and  $T_2$, then there are special linear maps from $T_1$ and $T_2$ onto $[e_1, e_2, \dots,  e_{k+1}]$. By the $\sln$ invariance of $\Psi'$, the valuation property and the induction assumption,  we obtain
$$\Psi'(T)= \Psi'(T_1)+\Psi'(T_2) - \Psi'(T_1 \cap T_2)= 2 \Psi'(T)  $$
which proves $ \Psi'(T)=0$.
\goodbreak

Second, let $P$ be a $k$-dimensional polytope with $0 \notin \aff P$. Triangulate  $P$ into $k$-dimensional simplices $T_1, \dots, T_m$. By the inclusion-exclusion principle (\ref{inex}), the induction assumption and the statement just proved for simplices, we have
$$
\Psi'(P)
=
\sum_{j=1}^{m} (-1)^{j-1} \sum_{1 \leq i_1 < \dots < i_j \leq m} \Psi'(T_{i_1} \cap \dots \cap T_{i_j}) 
=0.
$$

Third, let  $P$ be  a $k$-dimensional polytope with $0 \in \aff P $. For $P \in \cpni$ we already have $\Psi'(P)=0$. So assume $0 \notin P$ and let $F_1, \dots, F_m$ be the facets of $P$ visible from the origin, i.e. $ \relint F_i \subseteq \relint [0,P]$. 
Triangulate  the facets $F_i$ into simplices $T'_1, \dots, T'_l$,  and thus the closure of $[0,P]\setminus P$  into simplices 
$T_1=[0, T'_1], \dots, T_l=[0, T'_l]$ with a vertex at the origin.  Using the inclusion-exclusion principle (\ref{inex}),  the fact  that $\Psi'$ vanishes on $\cpni$, and by the induction hypothesis also on polytopes of dimension at most $k-1$, we obtain 
\begin{eqnarray*}
0 = \Psi'(\underbrace{[0,P]}_{\in \cpni})\!\!\!
&=& 
\sum_{j=1}^{m} (-1)^{j-1} \sum_{1 \leq i_1 < \dots < i_j \leq m} \Psi'(\underbrace{T_{i_1} \cap \dots \cap T_{i_j}}_{\in \cpni}) 
\\ && +
\sum_{j=1}^{m} (-1)^{j} \sum_{1 \leq i_1 < \dots < i_j \leq m} \Psi'(\underbrace{T_{i_1} \cap \dots \cap T_{i_j} \cap P}_{\dim \leq k-1}) + \Psi'(P)
\\ &=&
\Psi'(P).
\end{eqnarray*}
This completes the proof of the lemma.
\end{proof}

It remains to investigate polytopes of dimension at least $n-1$.

\goodbreak
\begin{lemma}\label{l:n-1}
There is a solution  $\beta:[0,\infty)\to\R$ of Cauchy's functional equation such that 
$$ \Psi'(P)=\beta\big( V_n([0,P])\big) $$
for every $(n-1)$-dimensional polytope $P \in\cP^n$.
\end{lemma}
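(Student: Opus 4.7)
The plan is to build $\beta$ first on $(n-1)$-simplices lying off the origin, extend to all $(n-1)$-polytopes with $0\notin \aff P$ by triangulation, and finally handle the degenerate case $0\in\aff P$ via a visibility decomposition copied from Lemma~\ref{l:n-2}.

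For the first step, observe that if $T$ is an $(n-1)$-simplex with $0\notin \aff T$, then $[0,T]$ is an $n$-simplex with a vertex at the origin and volume $v=V_n([0,T])$. An $\sln$ map sends $[0,T]$ to $[0,e_1,\dots,e_{n-1},n!\,v\,e_n]$, hence sends $T$ to $[e_1,\dots,e_{n-1},n!\,v\,e_n]$. By the $\sln$ invariance of $\Psi'$, the value
$$\beta(v):=\Psi'([e_1,\dots,e_{n-1},n!\,v\,e_n])$$
is well defined for $v>0$, set $\beta(0)=0$, and $\Psi'(T)=\beta(V_n([0,T]))$ for every such $T$. To verify Cauchy additivity, for $x,y>0$ choose such a $T$ with $V_n([0,T])=x+y$ and split it into two $(n-1)$-simplices $T_1,T_2$ lying in $\aff T$ with disjoint relative interiors and $V_n([0,T_i])\in\{x,y\}$. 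Since $T_1\cap T_2$ is $(n-2)$-dimensional, Lemma~\ref{l:n-2} gives $\Psi'(T_1\cap T_2)=0$, and the valuation property yields $\beta(x+y)=\beta(x)+\beta(y)$.

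For the second step, let $P$ be $(n-1)$-dimensional with $0\notin \aff P$. Triangulate $P$ into $(n-1)$-simplices $T_1,\dots,T_m$ having pairwise $(n-2)$-dimensional intersections. The inclusion-exclusion principle~(\ref{inex}) combined with Lemma~\ref{l:n-2} gives $\Psi'(P)=\sum_i\Psi'(T_i)=\sum_i\beta(V_n([0,T_i]))$. Since $0\notin\aff P$, the cones $[0,T_i]$ have pairwise disjoint interiors and their union is $[0,P]$, so $V_n([0,P])=\sum_i V_n([0,T_i])$. Iterating Cauchy additivity gives $\Psi'(P)=\beta(V_n([0,P]))$.

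The remaining case is $0\in \aff P$. If $0\in P$ then $P\in\cpni$ and $\Psi'(P)=0=\beta(0)=\beta(V_n([0,P]))$, so the only real work is $0\notin P$, $0\in\aff P$. Here $[0,P]$ lies in the $(n-1)$-dimensional subspace $\aff P$, hence $V_n([0,P])=0$, and we must show $\Psi'(P)=0$. I repeat the decomposition of Lemma~\ref{l:n-2}, but inside $\aff P$ instead of inside $\R^n$: let $F_1,\dots,F_m$ be the $(n-2)$-dimensional facets of $P$ visible from $0$ within $\aff P$, triangulate them into $(n-2)$-simplices, and cone each off $0$ to obtain $(n-1)$-simplices $T_1,\dots,T_l$ with $[0,P]=P\cup T_1\cup\dots\cup T_l$. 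Applying inclusion-exclusion to this union, every intersection of $T_i$'s contains $0$ and so lies in $\cpni$ where $\Psi'$ vanishes, every intersection involving $P$ and some $T_i$ has dimension at most $n-2$ and so vanishes by Lemma~\ref{l:n-2}, and $[0,P]\in\cpni$ gives $\Psi'([0,P])=0$. The single surviving term is $\Psi'(P)$, whence $\Psi'(P)=0=\beta(V_n([0,P]))$.

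The main obstacle is this last case: one has to realize that even though Lemma~\ref{l:n-2} was proved for polytopes of dimension at most $n-2$, the same visibility-cone decomposition transplants verbatim to $(n-1)$-polytopes in a subspace through the origin, because $\Psi'$ vanishes on $\cpni$ and on dimension $\le n-2$ polytopes, which together knock out every term in the inclusion-exclusion expansion except $\Psi'(P)$.
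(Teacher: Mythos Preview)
Your proof is correct and follows essentially the same three-step approach as the paper: first define $\beta$ on $(n-1)$-simplices with $0\notin\aff T$ and verify Cauchy additivity via a dissection and Lemma~\ref{l:n-2}; then extend to $(n-1)$-polytopes with $0\notin\aff P$ by triangulation and inclusion--exclusion; finally treat the case $0\in\aff P$ by the visibility-cone decomposition of $[0,P]$, using that $\Psi'$ vanishes on $\cpni$ and on polytopes of dimension at most $n-2$. The arguments and the key observations in each step match the paper's own proof.
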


\begin{proof}
First, let $T$ be an $(n-1)$-dimensional simplex with  $0 \notin \aff T$.  There is a  special linear map from $T$ onto the simplex $[e_1, \dots, e_{n-1}, n! v\, e_{n}]$ with $v= V_n([0, T])$. Define the function $\beta: [0,\infty)\to \R$ by
$$\beta(v) =\Psi'([e_1, \dots, e_{n-1}, n! v\, e_{n}]) = \Psi'(T).$$
Dissecting $T$ into two $(n-1)$-dimensional simplices $T_1$ and $T_2$ and setting $v_i=V_n([0, T_i])$ for $i=1,2$, we obtain by Lemma \ref{l:n-2} that
$$ \beta(v)= \Psi'(T)= \Psi'(T_1)+\Psi'(T_2) = \beta(v_1) + \beta(v_2)$$
where clearly $v=v_1+v_2$.
Thus $\beta$ is a solution of Cauchy's functional equation, and  we have $\Psi'(T)= \beta\big(V_n([0,T])\big)$.

\goodbreak
Second, let $P$ be an $(n-1)$-dimensional polytope with $0 \notin \aff P$. Triangulate $P$  into simplices $T_1, \dots, T_l$. Using the inclusion-exclusion principle (\ref{inex}) and that $\Psi'$ vanishes on at most $(n-2)$-dimensional polytopes, we obtain 
$$\Psi'(P)=\sum_{j=1}^{l} \Psi'(T_j)=\sum_{j=1}^{l} \beta\big(V_n([0,T_j]\big)=
\beta\big( V_n([0,P])\big), 
$$
where we used the previous calculation for simplices  and the finite additivity of $V_n$.

Third, let $P$ be an $(n-1)$-dimensional polytope with $0 \in \aff P$. Then the polytope $[0,P]$ is $(n-1)$-dimensional and thus $\beta \big(V_n([0,P])\big)=0$. So we have to prove that $\Psi'(P)=0$. If $P \in \cpni$ we already know that $\Psi'(P)=0$. So assume $0 \notin P$, and as in the proof of Lemma~\ref{l:n-2} triangulate the facets of $P$ visible from the origin,  and thus the closure of $[0,P]\setminus P$  into simplices 
$T_1, \dots, T_l$ with a vertex at the origin. Using that $\Psi'$ vanishes on $\cpni$ and on at most $(n-2)$-dimensional polytopes and the inclusion-exclusion principle (\ref{inex}), we obtain  
$$0 = \Psi'([0,P])=\sum_{j=1}^{l}  \Psi'(T_j )  +\Psi'(P)= \Psi'(P)$$ 
which completes the proof of the lemma.
\end{proof}
Observe that $V_n([0,P])=V_n([0,P]\setminus P)$ for every $(n-1)$-dimensional polytope $P \in\cP^n$. Thus Lemmas~\ref{l:n-2} and \ref{l:n-1} yield
$$ \Psi'(P) = \beta\big( V_n([0,P]\setminus P)\big) $$
if $\dim P \leq n-1$.
It remains to prove this also for polytopes $P$ of dimension $n$. 

For $P \in \cpni$, the assertion is trivial since $\Psi'$ vanishes on $\cpni$.  
Let $P\in\cP^n\backslash \cpni$, and let $F_1, \dots, F_m$ be the facets of $P$ visible from the 
origin. Since $\Psi'$ vanishes on $\cpni$ and on all at most $(n-2)$-dimensional polytopes, we have 
by the inclusion-exclusion principle (\ref{inex})
\begin{eqnarray*}
0 &=& \Psi'([0,P])
\\ &=&
\sum_{j=1}^{m} (-1)^{j-1} \sum_{1 \leq i_1 < \dots < i_j \leq m} \Psi'(\underbrace{[0, F_{i_1}] 
\cap \dots \cap [0,F_{i_j}]}_{\in \cpni}) 
\\ && +\ 
\sum_{j=2}^{m} (-1)^{j} \sum_{1 \leq i_1 < \dots < i_j \leq m} \Psi'(\underbrace{[0,F_{i_1}] \cap 
\dots \cap [0,F_{i_j}] \cap P}_{\dim \leq n-2}) 
\\ && -  \sum_{1 \leq i \leq m} \Psi'(\underbrace{[0,F_{i}]  \cap P}_{= F_i}) + 
\Psi'(P)
\\ &=&
\Psi'(P)- \sum_{i=1}^m \Psi'( F_i).
\end{eqnarray*}
Hence Lemma \ref{l:n-1} implies that
$$\Psi'(P)=\sum_{i=1}^m\Psi'(F_i) =\sum_{i=1}^m\beta\big(V_n([0,F_i])\big) = \beta\big(V_n([0,P]\backslash P)\big).$$
Thus we obtain the following for $\Psi$. For all $P\in\cP^n$,
\begin{align*}
\Psi(P)= \,&\, 
a_0\,\1_P (0) + a_0'\, (-1)^{\dim P\!}  \1_{\relint P} (0) + b_0 \1_{P^c}(0) 
\\[3pt]  &
+ \alpha\big(V_n(P)\big) + \beta\big(V_n([0,P]\backslash P)\big) .
\end{align*}
Set $c_0= a_0+b_0$ and $c_0'=a_0'$ as well as $d_0=-b_0$. Further define the functions $\phi, \psi:[0,\infty)\to \R$ as $\psi=\alpha-\beta$ and $\phi=\beta$. This gives the representation from Theorem \ref{th:2}. 

z
\section{Proofs of the Corollaries}\label{corollaries}
For Corollaries~\ref{co:1a} and \ref{co:1}, we show that the function $P\mapsto (-1)^{\dim P\!}  
\1_{\relint P} (0)$ is measurable. For $k=0,\dots, n$, the set  $\{P\in \cpni: \dim P\le k\}$ is 
closed in $\cpni$. Hence $\cP_{\scriptscriptstyle 0,k}^n=\{P\in \cpni: \dim P= k\}$ is a Borel set 
in $\cpni$ for $k=0,\dots, n$. Since the set $\{P \in \cpni:\dim P=k, \ 0 \in \relint P\} $ is open 
in $\cP_{\scriptscriptstyle 0,k}^n$, this shows that 
$\{P \in \cpni:\dim P=k, \ 0 \in \relint P\} $ is a Borel set in $\cpni$ and in $\cP^n$ for 
$k=0,\dots, n$, which implies measurability.

For Corollaries~\ref{co:1b} and \ref{co:2}, it suffices to show that the valuation
$$\Phi (P)= c_0' (-1)^{\dim P}  \1_{\relint P} (0)$$
is upper semicontinuous if and only if $c_0'=0$. 
Indeed, 
$$ \lim_{s \to 0}  \Phi([- se_1, se_1]) = - c_0' \leq \Phi(0) =c_0' $$
and hence $c_0' \geq 0$.
On the other hand
$$ \lim_{s \to 0}  \Phi([- se_1, se_1, -e_2, e_2]) = c_0' \leq \Phi([-e_2, e_2]) =-c_0' $$
and thus $c_0' \leq 0$ which gives $c_0'=0$.

For Corollary~\ref{co:2} it remains to note that $P\mapsto d_0 \1_{P}(0)$ is upper semicontinuous for $d_0 \geq 0$. 

\subsection*{Acknowledgments}
The authors thank the anonymous referees for many helpful comments. The work of Monika Ludwig was supported, in part, by Austrian Science Fund (FWF) Project P25515-N25.

\footnotesize

\medskip
\parindent=0pt

\bigskip
\begin{samepage}
Monika Ludwig\\
Institut f\"ur Diskrete Mathematik und Geometrie\\
Technische Universit\"at Wien\\
Wiedner Hauptstra\ss e 8-10/1046\\
1040 Wien, Austria\\
e-mail: monika.ludwig@tuwien.ac.at
\end{samepage}
\bigskip

\begin{samepage}
Matthias Reitzner\\
Institut f\"ur Mathematik\\
Universit\"at  Osnabr\"uck\\
Albrechtstra\ss e 28a \\
49076 Osnabr\"uck, Germany\\
e-mail: matthias.reitzner@uni-osnabrueck.de
\end{samepage}

\bigskip

\end{document}